\newtheorem{thm}{Theorem}[section]
\newtheorem*{thm*}{Theorem}
\newtheorem{lem}[thm]{Lemma}
\newtheorem{cor}[thm]{Corollary}
\theoremstyle{definition}
\newtheorem{defn}[thm]{Definition}
\theoremstyle{remark}
\newtheorem{rem}[thm]{Remark}
\numberwithin{equation}{section}
\DeclareMathOperator{\conv}{conv}
\DeclareMathOperator{\dist}{dist}
\newcommand{\inte}{\mathop{\rm int}}
\begin{document}

\title{Notes about the Carath\'{e}odory number}

\author{Imre~B\'{a}r\'{a}ny}
\email{barany@renyi.hu}
\address{Imre~B\'{a}r\'{a}ny,
R\'enyi Institute of Mathematics, Hungarian Academy of Sciences, PO Box 127, 1364 Budapest, Hungary;
and Department of Mathematics, University College London, Gower Street, London WC1E 6BT, England}

\author{Roman~Karasev}
\email{r\_n\_karasev@mail.ru}
\address{Roman Karasev, Dept. of Mathematics, Moscow Institute of Physics and Technology, Institutskiy per. 9, Dolgoprudny, Russia 141700; and Laboratory of Discrete and Computational Geometry, Yaroslavl' State University, Sovetskaya st. 14, Yaroslavl', Russia 150000}

\thanks{The work of both authors was supported by ERC Advanced Research Grant No 267195 (DISCONV). The first author acknowledges support from Hungarian National Research Grant No 78439. The second author is supported by the Dynasty Foundation, the President's of Russian Federation grant MK-113.2010.1, the Russian Foundation for Basic Research grants 10-01-00096 and 10-01-00139, the Federal Program ``Scientific and scientific-pedagogical staff of innovative Russia'' 2009--2013, and the Russian government project 11.G34.31.0053.}

\subjclass[2010]{52A35, 52A20} \keywords{Carath\'{e}odory's theorem, Helly's theorem,
Tverberg's theorem}

\begin{abstract}
In this paper we give sufficient conditions for a compactum in $\mathbb R^n$ to have
Carath\'{e}odory number less than $n+1$, generalizing an old result of Fenchel. Then we prove
the corresponding versions of the colorful Carath\'{e}odory theorem and give a Tverberg type
theorem for families of convex compacta.
\end{abstract}

\maketitle

\section{Introduction}

The Carath\'{e}odory theorem~\cite{car1911} asserts that every point $x$ in the convex hull
of a set $X\subset \mathbb R^n$ is in the convex hull of one of its subsets of cardinality at
most $n+1$. In this note we give sufficient conditions for the \emph{Carath\'{e}odory number}
to be less than $n+1$ and prove some related results. In order to simplify the reasoning we
always consider compact subsets of $\mathbb R^n$.

There are results about lowering the Carath\'{e}odory constant: A theorem of
Fenchel~\cite{fen1929,hr1951} asserts that a compactum $X\subset \mathbb R^n$ either has the
Carath\'{e}odory number $\le n$ or can be separated by a hyperplane into two non-empty parts.
By \emph{separated} we mean ``divided by a hyperplane disjoint from $X$ into two non-empty
parts''. In order to state more results we need formal definitions:

\begin{defn}
For a compactum $X\subset\mathbb R^n$ denote by $\conv_{k+1} X$ the sets of points $p\in
\mathbb R^n$ that can be expressed as a convex combination of at most $k+1$ points in $X$. We
denote by $\conv X$ (without subscript) the standard convex hull of $X$.
\end{defn}

\begin{defn}
The \emph{Carath\'{e}odory number} of $X$ is the smallest $k$ such that $\conv X = \conv_k X$.
\end{defn}

\begin{rem}
So Carath\'{e}odory's theorem~\cite{car1911} is equivalent to the equality $\conv X =
\conv_{n+1} X$ when $X \subset \mathbb R^n$. We will give an alternative definition for
$\conv_k X$ in Section~\ref{col-cara-sec} as the $k$-fold join of $X$.
\end{rem}

\begin{defn}
\label{k-conv-def} A compactum $X\subset \mathbb R^n$ is $k$-convex if every linear image of
$X$ to $\mathbb R^k$ is convex.
\end{defn}

We give some examples of $k$-convex sets. What is needed in Fenchel's theorem is
$1$-convexity and every connected set is $1$-convex. The $k$-skeleton of a convex polytope is
$k$-convex (though for such $k$-convex sets most results of this paper are trivial).
In~\cite{brick1961} (see also~\cite[Chapter~II, \S~14]{barv2002}) it is shown that the image
of the sphere under the Veronese map $v_2 : S^{n-1} \to \mathbb R^{n(n+1)/2}$ (with all
degree $2$ monomials as coordinates) is $2$-convex.

In~\cite[Corollary~1]{hr1951} the following remarkable result is proved:

\begin{thm}[Hanner--R\aa dstr\"{o}m, 1951]
\label{hann-rad}
If $X$ is a union of at most $n$ compacta $X_1,\ldots, X_n$ in $\mathbb R^n$
and each $X_i$ is $1$-convex then $\conv_n X = \conv X$.
\end{thm}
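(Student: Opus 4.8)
The plan is to argue by contradiction: assume $p\notin\conv_n X$. By Carath\'eodory's theorem $p$ is a convex combination of some $x_1,\dots,x_{n+1}\in X$; since $n$ of them would not suffice, the points are affinely independent and $p$ lies in the interior of the simplex $\Delta=\conv\{x_1,\dots,x_{n+1}\}$. After a translation take $p=0$, so $0=\sum_{j=1}^{n+1}\lambda_j x_j$ with all $\lambda_j>0$. Because $X=X_1\cup\cdots\cup X_n$ has $n$ pieces and $\Delta$ has $n+1$ vertices, two vertices lie in one piece; relabel so that $x_n,x_{n+1}\in X_i$.

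The heart of the reduction is to replace the pair $x_n,x_{n+1}$ by a single point of $X_i$. Put $\sigma=\conv\{x_1,\dots,x_{n-1}\}$ and $C:=\cone\{-x_1,\dots,-x_{n-1}\}$, which equals $p+\cone(p-\sigma)$ and which is exactly the set of $q$ with $0\in\conv(\sigma\cup\{q\})$; it is a closed simplicial cone lying in the hyperplane $W:=\lin\{x_1,\dots,x_{n-1}\}$. Note that $z:=-\sum_{k\le n-1}\lambda_k x_k=\lambda_n x_n+\lambda_{n+1}x_{n+1}$ lies both in the relative interior of $C$ and in $\cone\{x_n,x_{n+1}\}$, so the chord $[x_n,x_{n+1}]$ of $X_i$ already meets $C$; and if one produces a single $x^\ast\in X_i\cap C$, then $x^\ast=-\sum_{k\le n-1}t_k x_k$ with $t_k\ge 0$ exhibits $0$ as a convex combination of the $n$ points $x_1,\dots,x_{n-1},x^\ast$ of $X$ — a contradiction. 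So everything reduces to proving $X_i\cap C\neq\emptyset$.

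Here I would split on whether $0\in\conv X_i$. If yes, then since $X_i$ is $1$-convex Fenchel's theorem gives $\conv_n X_i=\conv X_i\ni 0$, hence $0\in\conv_n X$, contradiction. If no, then (also $0\notin X_i$) separate $0$ from the compactum $X_i$ by a linear functional $g>0$ on $X_i$; since $x_n,x_{n+1}$ sit on opposite sides of $W$ (because $\lambda_n x_n+\lambda_{n+1}x_{n+1}=z\in W$), $1$-convexity of $X_i$ forces $X_i\cap W\neq\emptyset$. For $n=2$ this finishes: $W$ is a line, $C$ one of its two rays, and $g(z)=\lambda_2g(x_2)+\lambda_3g(x_3)>0$ together with $z=-\lambda_1x_1$ gives $g(x_1)<0$, so every point $sx_1$ of $X_i\cap W$ has $s<0$, i.e.\ lies in $C$. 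This, with the trivial $n=1$ case, is the base of an induction on $n$: for $n\ge 3$, $C$ is merely a simplicial subcone of $W$ and one functional no longer traps $X_i\cap W$ inside $C$, so I would quotient $\mathbb R^n$ by the line through a vertex $x_k$ ($k\le n-1$) whose colour occurs exactly once among the vertices — which is automatic when all $n$ colours appear — reducing to a union of at most $n-1$ $1$-convex compacta in $\mathbb R^{n-1}$ with the image of $0$ in its hull, applying the inductive hypothesis there, and lifting the resulting $(n-1)$-term representation.

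The step I expect to be the real obstacle is precisely this inductive passage for $n\ge 3$. Two points need care: choosing the line so that dimension \emph{and} number of colour classes both drop by one — which forces one to treat separately the degenerate colourings in which several colours repeat, presumably by first re-choosing the simplex or by confining part of the configuration to a proper flat — and checking that the short representation found in $\mathbb R^{n-1}$ lifts back with only $n$ terms. In the lift, $\sum\nu_m y_m$ lands on a multiple $cx_k$ of the collapsed direction, and one wants $c\le 0$; this sign has to be arranged (e.g.\ by a descent on simplices, exploiting that the particular representation coming from the $\pi(x_j)$, $j\neq k$, already has the right sign $c_0=-\lambda_k/(1-\lambda_k)<0$). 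Once this bookkeeping is settled the geometry is routine — the geometric content sits entirely in the cone reduction of the second paragraph together with Fenchel's theorem.
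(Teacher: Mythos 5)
You should first note that the paper does not actually prove Theorem~\ref{hann-rad}: it is quoted from~\cite{hr1951}, and the only comment on its proof is the remark after Lemma~\ref{eulergrass}, namely that Hanner and R\aa dstr\"{o}m use a Brouwer-type fixed point statement for convex subsets of $S^{n-1}$. So your argument has to stand entirely on its own. The parts you carry out in full are correct: the normalization (affine independence of $x_1,\dots,x_{n+1}$, all $\lambda_j>0$, two vertices $x_n,x_{n+1}$ in one piece $X_i$), the observation that any $x^\ast\in X_i\cap C$ with $C=\cone\{-x_1,\dots,-x_{n-1}\}$ produces an $n$-term representation of $0$, the disposal of the case $0\in\conv X_i$ via Fenchel's theorem (equivalently Corollary~\ref{k-conv-cara} with $k=n-1$), and the complete treatment of $n\le 2$.

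The gap is the case $n\ge 3$, and it is not mere bookkeeping. First, the reduction ``everything reduces to $X_i\cap C\neq\emptyset$'' reduces to something that is false for the fixed configuration: $1$-convexity applied to the projection with kernel $W=\lin\{x_1,\dots,x_{n-1}\}$ only gives $X_i\cap W\neq\emptyset$, and already for $n=3$ a $1$-convex continuum joining $x_n$ to $x_{n+1}$ can meet the plane $W$ only in $W\setminus C$, where $C$ is a proper sector; so any correct proof must be allowed to move the other vertices as well, which neither of your two routes does. Second, the quotient induction does not close because the inductive hypothesis in $\mathbb R^{n-1}$ is a bare existence statement: it gives some $(n-1)$-term representation $0=\sum\nu_m\pi(y_m)$ with no control whatsoever over the lift $\sum\nu_m y_m=cx_k$, and $c>0$ is fatal. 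Your proposed fix (``descent on simplices'') would need the short representation to be reachable from the initial one $(\pi(x_j))_{j\neq k}$ by a path inside the space of representations of $0$ — then $c$ either stays negative or crosses $0$, and $c=0$ finishes the proof outright — but that connectivity is not part of the inductive statement and is not obviously true. (By contrast, your worry about degenerate colourings is harmless: if no colour is a singleton then at most $\lfloor(n+1)/2\rfloor\le n-1$ colours occur at all, so the colour count drops automatically.) This sign/connectivity issue is exactly where a global topological input enters in the known proofs — the fixed-point argument of~\cite{hr1951}, or in this paper the Euler class of $\gamma$ in Theorem~\ref{k-sep} and the minimization-plus-parity argument of Theorem~\ref{k-conv-color}. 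As written, your proof is complete only for $n\le 2$.
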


It is also known~\cite{ks1966,barv2002} that a \emph{convex curve} in $\mathbb R^n$ (that is
a curve with no $n+1$ points in a single affine hyperplane) has Carath\'{e}odory number at
most $\lfloor\frac{n+2}{2}\rfloor$. It would be interesting to obtain some nontrivial bounds
for the Carath\'{e}odory number of the orbit $Gx$ of a point $x$ in a representation $V$ of a
compact Lie group $G$ in terms of $\dim V$, $\dim G$ (or the rank of $G$). The latter
question is mentioned in~\cite[Question~3]{sss2009} and would be useful in results like those
in~\cite{mil1988}.

In Section~\ref{k-conv-cara-sec} of this paper we show that the Carath\'{e}odory number is
$\le k+1$ for $(n-k)$-convex sets. In Section~\ref{col-cara-sec} we prove the corresponding
analogue of the colorful Carath\'{e}odory theorem, and in Section~\ref{tverberg-sec} we give
a related Tverberg-type result.

\section{The Carath\'{e}odory number and $k$-convexity}
\label{k-conv-cara-sec}

We are going to give a natural generalization of the reasoning in~\cite{hr1951}:

\begin{thm}
\label{k-sep} Suppose $X_1, \ldots, X_{n-k}$ are compacta in $\mathbb R^n$ and $p$ does not
belong to $\conv_{k+1} X_i$ for any $i$. Then there exists an affine $k$-plane $L\ni p$ that
has empty intersection with any $X_i$.
\end{thm}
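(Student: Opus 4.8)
The plan is to argue by contradiction after two harmless normalizations. Translate so that $p=0$, so we seek a linear $k$-subspace $L$ with $L\cap X_i=\emptyset$ for every $i$. Both the hypothesis $0\notin\conv_{k+1}X_i$ and the conclusion ``$L$ meets $X_i$'' depend only on the rays $\mathbb R_{>0}x$, $x\in X_i$ (replacing $x\ne 0$ by $x/|x|$ turns $\sum\lambda_j x_j=0$, $\lambda_j\ge 0$, into $\sum(\lambda_j|x_j|)(x_j/|x_j|)=0$), and if $0\in X_i$ then $0\in\conv_1 X_i$, contradicting the hypothesis; so we may assume $X_1,\dots,X_{n-k}\subseteq S^{n-1}$ with $0\notin X_i$. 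Assume for contradiction that every linear $k$-subspace of $\mathbb R^n$ meets $U:=X_1\cup\dots\cup X_{n-k}$; the goal is to produce $k+1$ points of some $X_i$ whose convex hull contains $0$.

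The case $k=1$ already contains the main idea and uses the Borsuk--Ulam theorem. Here ``every line through $0$ meets $U$'' says exactly $U\cup(-U)=S^{n-1}$, with $n-1$ summands. If no $X_i$ contained an antipodal pair, then $X_i$ and $-X_i$ would be disjoint compacta, so $g_i(v)=\dfrac{\dist(v,-X_i)-\dist(v,X_i)}{\dist(v,-X_i)+\dist(v,X_i)}$ is a well-defined continuous odd function on $S^{n-1}$ with $g_i\equiv 1$ on $X_i$ and $g_i\equiv -1$ on $-X_i$. Then $G=(g_1,\dots,g_{n-1})\colon S^{n-1}\to\mathbb R^{n-1}$ is odd, and $G(v)\ne 0$ for all $v$ (by $U\cup(-U)=S^{n-1}$, either $v\in X_i$ and $g_i(v)=1$, or $-v\in X_i$ and $g_i(v)=-1$, for some $i$); but an odd map $S^{n-1}\to\mathbb R^{n-1}$ must vanish somewhere, a contradiction. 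Hence some $X_i$ contains $\{x,-x\}$ and $0\in\conv_2 X_i$.

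For general $k$ I see two routes. Route~A runs this scheme uniformly: replace $S^{n-1}$ with the antipodal $\mathbb Z/2$-action by a space parametrizing oriented $k$-subspaces of $\mathbb R^n$ with a compatible symmetry group, replace $\mathbb R^{n-1}$ by a suitable representation, use the hypothesis $0\notin\conv_{k+1}X_i$ --- equivalently, that the convex-combination map out of the $(k+1)$-fold join of $X_i$ misses $0$ --- to build an equivariant test map avoiding the relevant zero-set, and conclude with a Borsuk--Ulam/Dold-type nonexistence theorem. The real content, and the step I expect to be hard, is pinning down the correct parametrizing and test spaces and carrying out the corresponding index computation.

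Route~B is an induction on $n$ (the case $n=1$ being trivial). Fix any hyperplane $H\ni 0$: if $L'\subseteq H$ is a linear $(k-1)$-subspace with $L'\cap(X_i\cap H)=\emptyset$ for all $i$, then, since $L'\subseteq H$, in fact $L'\cap X_i=\emptyset$ for all $i$. Now $H\cong\mathbb R^{n-1}$ carries the $n-k=(n-1)-(k-1)$ compacta $X_i\cap H$, and since $0\notin\conv_{k+1}X_i$ forbids $0$ from the convex hull of any $\le k$ points of $X_i$, a fortiori $0\notin\conv_k(X_i\cap H)$; so the inductive hypothesis in $H$ yields such an $L'$, but only of dimension $k-1$. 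It remains to enlarge $L'$ to a $k$-subspace $L=L'\oplus\langle v\rangle$ still avoiding every $X_i$: with $q\colon\mathbb R^n\to\mathbb R^n/L'\cong\mathbb R^{n-k+1}$ the projection, $L$ avoids $X_i$ exactly when the line $\langle q(v)\rangle$ avoids $q(X_i)$, which is again the case $k=1$, now in $\mathbb R^{n-k+1}$ with $n-k$ compacta, needing $0\notin\conv_2 q(X_i)$. That last condition is the sticking point: $0\in\conv_2 q(X_i)$ means $\lambda x+x'\in L'$ for some $x,x'\in X_i$ and $\lambda>0$, which is not forced by $0\notin\conv_{k+1}X_i$ when $k\ge 2$. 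To make Route~B close one must therefore choose $H$ and $L'$ so that $L'$ also misses the secant sets $\{\lambda x+x':x,x'\in X_i,\ \lambda>0\}$ (closed away from $0$, and containing $X_i$), and propagating such strengthened hypotheses through the induction is, I expect, the main difficulty here; note finally that the extreme case $k=n-1$ is essentially Fenchel's theorem, up to rotating the separating hyperplane to pass through $p$.
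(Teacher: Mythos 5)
Your $k=1$ case is correct, and it is in substance the paper's own argument specialized to that case: the Borsuk--Ulam theorem for an odd nonvanishing map $S^{n-1}\to\mathbb R^{n-1}$ is exactly the statement that $n-1$ nowhere-commonly-zero sections of the tautological line bundle over $G_n^1=\mathbb{RP}^{n-1}$ cannot exist, i.e.\ $e(\gamma)^{n-1}\ne 0$. But for $k\ge 2$ you prove nothing. Route~B you yourself correctly diagnose as failing (the hypothesis $0\notin\conv_{k+1}X_i$ does not survive the quotient by $L'$, and there is no evident way to propagate a strengthened hypothesis). Route~A is only the assertion that a configuration-space/test-map argument should exist, together with the admission that you cannot identify the spaces or carry out the index computation --- but that identification \emph{is} the proof, so this is a genuine gap.

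The missing construction is the following. Parametrize candidate $k$-planes through $0$ by the Grassmannian $G_n^k$ and take as target the tautological $k$-plane bundle $\gamma$, whose fiber over $L$ is $L$ itself; no auxiliary symmetry group or representation is needed. The hypothesis enters through the ordinary Carath\'eodory theorem applied \emph{inside the $k$-dimensional space} $L$: if $0\in\conv(L\cap X_i)$ then $0\in\conv_{k+1}(L\cap X_i)\subseteq\conv_{k+1}X_i$, so in fact $0\notin\conv(L\cap X_i)$ for every $k$-subspace $L$ and every $i$, with a uniform positive separation by compactness. Hence, after smoothing $X_i$ to a nonnegative bump function $\rho_i$ supported in a small enough neighborhood, the ``momentum'' $m_i(L)=\int_L\rho_i\,x\,dx$ is a continuous section of $\gamma$ that is nonzero at every $L$ with $\int_L\rho_i>0$: it is a positive average of points of $L$ near $X_i$ whose convex hull misses $0$. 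Since $e(\gamma)^{n-k}\ne 0$ in the top mod~$2$ cohomology of $G_n^k$ (Lemma~\ref{eulergrass}, the Dol'nikov--\v{Z}ivaljevi\'c--Vre\'cica fact), the $n-k$ sections $m_1,\dots,m_{n-k}$ must have a common zero $L$, and such an $L$ misses every $X_i$ (then let the smoothing parameter tend to $0$ and use compactness of $G_n^k$). Your functions $g_i$ are precisely the $k=1$ shadow of these sections; what Route~A lacks is the realization that the test map should take values in the moving $k$-plane itself --- i.e.\ in the tautological bundle --- and that the centroid of $L\cap X_i$ is the right section, with $0\notin\conv_{k+1}X_i$ being exactly what keeps it away from zero.
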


\begin{rem}
If we replace $\conv_{k+1} X_i$ by the honest convex hull $\conv X_i$ then the result is
simply deduced by induction from the Hahn--Banach theorem.
\end{rem}

\begin{rem}
In~\cite{kos1962} a somewhat related result was proved: For a compactum $X\subset \mathbb R^n$ and a point $p\not\in X$ there exists an affine $k$-plane $L$ (for a prescribed $k<n$) such that the intersection $L\cap K$ is not acyclic modulo $2$. Here \emph{acyclic} means having the \v{C}ech cohomology of a point.
\end{rem}

The proof of Theorem~\ref{k-sep} is given in Section~\ref{k-sep-sec}. Now we deduce the following generalization of Fenchel's theorem~\cite{fen1929}:

\begin{cor}
\label{k-conv-cara}
If a compactum $X\subset\mathbb R^n$ is $(n-k)$-convex then $\conv_{k+1} X = \conv X$.
\end{cor}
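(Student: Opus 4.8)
The plan is to argue by contradiction and feed the situation into Theorem~\ref{k-sep} with all of the compacta taken equal. Fix $p\in\conv X$ and suppose, contrary to the claim, that $p\notin\conv_{k+1}X$. Applying Theorem~\ref{k-sep} to the $n-k$ compacta $X_1=\cdots=X_{n-k}=X$ — whose hypothesis $p\notin\conv_{k+1}X_i$ holds for every $i$ by assumption — we obtain an affine $k$-plane $L\ni p$ with $L\cap X=\emptyset$.

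Next I would pass to the quotient by the direction space of $L$. Write $L=p+L_0$ with $L_0$ a $k$-dimensional linear subspace, and choose a linear complement so that the quotient projection is realized as a linear surjection $\pi\colon\mathbb R^n\to\mathbb R^n/L_0\cong\mathbb R^{n-k}$. The point to record is the elementary equivalence
\[
 L\cap X=\emptyset \iff \pi(p)\notin\pi(X),
\]
since a point $x\in X$ lies on $L$ precisely when $x-p\in L_0$, i.e. when $\pi(x)=\pi(p)$.

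Then I would invoke $(n-k)$-convexity: because $\pi$ is a linear map into $\mathbb R^{n-k}$, the set $\pi(X)$ is convex, so $\pi(X)=\conv\pi(X)=\pi(\conv X)$. As $p\in\conv X$, this gives $\pi(p)\in\pi(\conv X)=\pi(X)$, contradicting $L\cap X=\emptyset$. Hence $p\in\conv_{k+1}X$; since $p\in\conv X$ was arbitrary and the inclusion $\conv_{k+1}X\subseteq\conv X$ is trivial, we conclude $\conv_{k+1}X=\conv X$.

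All the real work sits in Theorem~\ref{k-sep}; the deduction here is the routine ``project and use convexity'' step, exactly parallel to how Fenchel's theorem is the case $k=n-1$ (there $1$-convexity forces the $1$-dimensional image $\pi(X)$ to be an interval, so $\pi(p)$ cannot miss it). The only thing to watch is the bookkeeping that collapsing an affine $k$-plane produces an $(n-k)$-dimensional linear image, so that $(n-k)$-convexity is precisely the hypothesis one needs — there is no substantive obstacle beyond citing Theorem~\ref{k-sep}.
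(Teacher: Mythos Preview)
Your argument is correct and follows essentially the same route as the paper: contradict, apply Theorem~\ref{k-sep} to $n-k$ copies of $X$, then project along the direction of the resulting $k$-plane and invoke $(n-k)$-convexity of the image. The only cosmetic difference is that the paper phrases the final contradiction via a separating hyperplane in $\mathbb R^{n-k}$, whereas you use the cleaner observation $\pi(X)=\conv\pi(X)=\pi(\conv X)$ directly.
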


\begin{proof}
Assume the contrary and let $p\in \conv X\setminus \conv_{k+1} X$. Applying
Theorem~\ref{k-sep} to the family $\underbrace{X,\ldots, X}_{n-k}$ we find a $k$-dimensional
$L\ni p$ disjoint from $X$. Now project $X$ along $L$ with $\pi : \mathbb R^n\to \mathbb
R^{n-k}$. Since $X$ is $(n-k)$-convex $\pi(L)$ must be separated from $X$ by a hyperplane.
Hence $L$ is separated from $X$ by a hyperplane and therefore $p$ cannot be in $\conv X$.
\end{proof}

\begin{rem}
In the above lemma and its proof we could consider $n-k$ different $(n-k)$-convex compacta
$X_1,\ldots, X_{n-k}$ and by the same reasoning obtain the following conclusion:
$$
\bigcup_{i=1}^{n-k} \conv_{k+1} X_i = \bigcup_{i=1}^{n-k} \conv X_i.
$$
But this result trivially follows from Corollary~\ref{k-conv-cara} by taking the union.
\end{rem}

\begin{rem}
For the image $v_2(S^{n-1})$ of the Veronese map the Carath\'{e}odory constant is roughly of
order $n$, see~\cite[Chapter~II, \S~14, Theorem~14.3]{barv2002}. Hence
Corollary~\ref{k-conv-cara} is not optimal for this set.
\end{rem}

\section{Proof of Theorem~\ref{k-sep}}
\label{k-sep-sec}

Let us replace $X_i$ by a smooth nonnegative function $\rho_i$ such that $\rho_i>0$ on $X_i$
and $\rho_i = 0$ outside some $\varepsilon$-neighborhood of $X_i$. Let $p$ be the origin.

Assume the contrary: for any $k$-dimensional linear subspace $L\subset \mathbb R^n$ some
intersection $L\cap X_i$ is nonempty. The space of all possible $L$ is the Grassmann manifold
$G_n^k$. Denote by $D_i$ the open subset of $G_n^k$ consisting of $L\in G_n^k$ such that
$\int_L \rho_i > 0$. Note that $0$ cannot lie in the convex hull $\conv (L\cap X_i)$ because
in this case by the ordinary Carath\'{e}odory theorem $0$ would be in $\conv_{k+1} (L\cap
X_i) \subseteq \conv_{k+1} X_i$, contradicting the hypothesis. Hence (if we choose small
enough $\varepsilon>0$) the ``momentum'' integral
$$
m_i (L) = \int_L \rho_i x\; dx
$$
never coincides with $0$ over $D_i$. Obviously $m_i(L)$ is a continuous section of the
canonical vector bundle $\gamma : E(\gamma) \to G_n^k$, which is nonzero over $D_i$. Now we
apply:

\begin{lem}
\label{eulergrass} Any $n-k$ sections of $\gamma : E(\gamma) \to G_n^k$ have a common zero
because of the nonzero Euler class $e(\gamma)^{n-k}$.
\end{lem}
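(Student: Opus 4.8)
The plan is to reduce the statement to a standard fact about the top power of the Euler class of the tautological bundle $\gamma$ over the real Grassmannian $G_n^k$ of $k$-planes through the origin in $\mathbb R^n$. Here I will work with the oriented Grassmannian $\widetilde G_n^k$ (or equivalently use $\mathbb Z/2$ coefficients throughout) so that $\gamma$ has a well-defined Euler class $e(\gamma)\in H^k(G_n^k)$. The key numerical input is that $\dim G_n^k = k(n-k)$ and that $e(\gamma)^{n-k}\in H^{k(n-k)}(G_n^k)$ is the top-dimensional cohomology class, which is nonzero — indeed it is a generator of the top cohomology, a classical computation (it is essentially the statement that $G_n^k$, viewed via the Plücker or the standard cell structure, has its fundamental class detected by this power of $e(\gamma)$; see Milnor--Stasheff for the Schubert-calculus version, where $w_k(\gamma)^{n-k}\neq 0$ in $H^*(G_n^k;\mathbb Z/2)$).

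The main step is then the following general principle: if $\xi\to M$ is a rank-$k$ vector bundle over a closed manifold $M$ and $s_1,\dots,s_m$ are continuous sections, then the locus $Z=\{x\in M : s_1(x)=\dots=s_m(x)=0\}$ is, after a generic perturbation, Poincaré dual to $e(\xi)^m$; in particular if $e(\xi)^m\neq 0$ then $Z\neq\emptyset$, and moreover any small perturbation of the $s_i$ still has a common zero. Concretely I would argue: form the bundle $\xi^{\oplus m}\to M$ with the section $(s_1,\dots,s_m)$; its zero set is $Z$, and the Euler class of $\xi^{\oplus m}$ is $e(\xi)^m$. If $Z=\emptyset$ the section $(s_1,\dots,s_m)$ is nowhere zero, which forces $e(\xi^{\oplus m})=e(\xi)^m=0$, a contradiction. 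Applying this with $M=G_n^k$, $\xi=\gamma$, $m=n-k$, and using $e(\gamma)^{n-k}\neq 0$ gives the lemma.

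There is one genuine subtlety that I expect to be the main obstacle: orientability of $\gamma$ and hence the existence of a $\mathbb Z$-valued Euler class. The tautological bundle over the unoriented Grassmannian is not orientable, so strictly speaking one either passes to the double cover $\widetilde G_n^k$ and lifts the sections (the sections $m_i(L)$ of the problem are defined over the unoriented Grassmannian, so they pull back without trouble, and a common zero upstairs maps to a common zero downstairs), or one works with the mod-$2$ Euler class $e(\gamma)=w_k(\gamma)\in H^k(G_n^k;\mathbb Z/2)$ throughout, for which the vanishing argument above goes through verbatim over $\mathbb Z/2$. In either formulation the crucial nonvanishing $w_k(\gamma)^{n-k}\neq 0$ (equivalently $e(\gamma)^{n-k}\neq 0$ on the oriented cover) is the classical Schubert-calculus computation of the cohomology ring of the Grassmannian, which I will simply cite. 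The rest — the obstruction-theoretic fact that a nowhere-zero section of a bundle kills its Euler class, and the identity $e(\xi^{\oplus m})=e(\xi)^m$ — is standard and requires no computation beyond naturality and the Whitney product formula.
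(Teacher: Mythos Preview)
Your argument is correct and is precisely the standard proof of this folklore fact. The paper itself does not prove this lemma---it merely states it and cites \cite{dol1987,zivvre1990}---so there is nothing to compare against; your write-up (Whitney sum $\gamma^{\oplus(n-k)}$, multiplicativity $e(\gamma^{\oplus(n-k)})=e(\gamma)^{n-k}$, and the nonvanishing $w_k(\gamma)^{n-k}\neq 0$ in $H^{k(n-k)}(G_n^k;\mathbb Z/2)$, with the orientability issue handled by working mod~$2$ or on the oriented double cover) is exactly the argument one finds in those references.
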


This lemma is a folklore fact, see~\cite{dol1987,zivvre1990} for example. Applying this lemma
to the sections $m_i$ we obtain that the sets $D_i$ do not cover the entire $G_n^k$. Hence
some $L\in G_n^k$ has an empty intersection with every $X_i$. \hfill $\Box$

\begin{rem}
In the proof of Theorem~\ref{hann-rad} in~\cite{hr1951} instead of finding a zero of a
section of a vector bundle over $\mathbb RP^{n-1}$ some analogue of the Brouwer fixed point
theorem is used for a convex subset of the sphere $S^{n-1}$.
\end{rem}

\section{The colorful Carath\'{e}odory number}
\label{col-cara-sec}

Let us introduce some notation and restate the colorful Carath\'{e}odory theorem~\cite{bar1982}.

\begin{defn}
Denote $A*B$ the \emph{geometric join} of two sets $A,B\in \mathbb R^n$, which is
$$
\{ta + (1-t)b : a\in A, b\in B,\ \text{and}\ t\in [0, 1]\}.
$$
This is actually the alternative definition of $\conv_k X$ as $\underbrace{X* \dots* X}_k$.
\end{defn}

\begin{thm}[B\'{a}r\'{a}ny, 1982]
\label{col-cara} If $X_1,\ldots, X_{n+1}\subset \mathbb R^n$ are compacta and $0\in \conv
X_i$ for every $i$ then $0\in X_1*X_2*\dots * X_{n+1}$.
\end{thm}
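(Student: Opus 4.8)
The plan is to mimic the proof of Theorem~\ref{k-sep} given in Section~\ref{k-sep-sec}, but with the Grassmannian replaced by a product of spheres and the Euler class argument replaced by a Borsuk--Ulam type / degree argument, which is the standard topological approach to the colorful Carath\'{e}odory theorem. First I would smooth out the data: replace each $X_i$ by a nonnegative smooth function $\rho_i$ supported in a small $\varepsilon$-neighborhood of $X_i$ with $\rho_i>0$ on $X_i$. Assume for contradiction that $0\notin X_1*\cdots*X_{n+1}$, i.e. there is no choice of points $x_i\in X_i$ and weights $t_i\ge 0$, $\sum t_i=1$, with $\sum t_i x_i=0$.

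Next I would set up the configuration space. For a unit vector $u\in S^n\subset\mathbb R^{n+1}$ one wants to pick, for each color $i$, the ``$u$-extreme'' part of $X_i$; the natural parametrization is by the product $(S^n)^{n+1}$, or more economically one fixes the open hemisphere directions and builds, for each $i$, a test map $f_i:S^{n}\to\mathbb R^{n+1}$ whose components record both a linear functional value and a momentum integral of $\rho_i$ over the slab cut off by a hyperplane with normal in the relevant direction. Concretely, each $f_i$ should be a nonvanishing (by the contradiction hypothesis, together with $0\in\conv X_i$ which forbids $X_i$ lying in an open halfspace) continuous map, and the colorful hypothesis $0\in\conv X_i$ should force a symmetry (oddness) or a degree/linking condition on $f_i$. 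Then the non-existence of a common colorful combination realizing $0$ translates into these $n+1$ maps having no common ``coincidence'', and I would invoke a cohomological index argument: the product of the Euler classes, or a Borsuk--Ulam statement for $(\mathbb Z/2)^{\,?}$ or $S^1$ actions, is nonzero, yielding a contradiction. This is precisely the analogue of Lemma~\ref{eulergrass}, now phrased over $(S^n)^{n+1}$ (or over $S^n$ with $n+1$ sections of a suitable bundle).

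The main obstacle, as usual in these topological proofs, is bookkeeping the configuration space and the equivariance so that the obstruction class is genuinely nonzero: one must choose the right compact group action (the deleted product / deleted join of the colors carries a natural symmetric-group or torus action), identify the relevant classifying space, and check that the primary obstruction — a product of Euler classes of the tautological bundle pulled back along the $n+1$ color-coordinates — does not vanish in the cohomology of that space. A secondary technical point is the $\varepsilon\to 0$ limit: one gets, for each small $\varepsilon$, an approximate solution (points $x_i^\varepsilon$ in the $\varepsilon$-neighborhoods and weights $t_i^\varepsilon$), and then passes to a subsequential limit using compactness of $X_i$ and of the simplex of weights to land an honest point of $X_1*\cdots*X_{n+1}$ equal to $0$, contradicting the assumption.

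An alternative, and arguably cleaner, route I would keep in reserve is to derive Theorem~\ref{col-cara} as a limiting/Hausdorff-approximation consequence of the classical colorful Carath\'{e}odory theorem~\cite{bar1982} for finite point sets: approximate each compactum $X_i$ from inside by a finite $\varepsilon$-net $F_i\subset X_i$, and check that $0\in\conv X_i$ implies $0$ is within $\delta(\varepsilon)$ of $\conv F_i$; apply the finite colorful theorem to (a slight perturbation of) the $F_i$ to get a colorful simplex containing a point $\delta$-close to $0$; then let $\varepsilon\to 0$ and extract a limit colorful combination. Here the only subtlety is handling the small error between $0$ and $\conv F_i$ — one can absorb it by replacing $F_i$ with $F_i\cup\{$a controlled nearby point$\}$ or by a standard perturbation argument — after which compactness finishes the proof exactly as in the $\varepsilon\to 0$ step above.
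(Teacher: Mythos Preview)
The paper does not give a standalone proof of Theorem~\ref{col-cara}; it is quoted from~\cite{bar1982}, and the classical argument is the minimization (pivoting) scheme that the paper reproduces inside the proof of Theorem~\ref{k-conv-color}: choose representatives $x_i\in X_i$ minimizing $\dist(0,\conv\{x_1,\dots,x_{n+1}\})$ (the minimum is attained by compactness), and if the distance is positive, swap a redundant $x_i$ for a point of $X_i$ on the near side of the supporting hyperplane to get strictly closer, contradicting minimality.

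Your first route---mimicking Section~\ref{k-sep-sec} over a product of spheres with an Euler-class/Borsuk--Ulam obstruction---is not a proof but a plan with the hard part left open: you never pin down the configuration space, the group action, the maps $f_i$, or the nonvanishing of the obstruction, and you explicitly flag these as ``the main obstacle''. More to the point, the paper itself attempts a topological approach to the colorful statement via Lemma~\ref{join-in-bundle} and then remarks that this machinery \emph{does not} reach the case $k=n$, i.e.\ Theorem~\ref{col-cara}. So the gap in your first approach is substantive, not cosmetic.

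Your second route (reduce to the finite colorful theorem by approximation) does work, but the $\varepsilon$-net and limiting step are unnecessary: since $0\in\conv X_i$, the ordinary Carath\'eodory theorem already gives a \emph{finite} $F_i\subset X_i$ with $0\in\conv F_i$, and applying the finite colorful theorem to $F_1,\dots,F_{n+1}$ finishes immediately with no error to manage. Either way, the clean argument---and the one the paper actually exhibits in the proof of Theorem~\ref{k-conv-color}---is the direct pivoting proof.
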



It is possible to reduce the Carath\'{e}odory number $n+1$ assuming the $(n-k)$-convexity of
$X_i$, thus generalizing Corollary~\ref{k-conv-cara}:

\begin{thm}
\label{k-conv-color} Let $0\le k \le n$. If $X_1,\ldots, X_{k+1}\subset \mathbb R^n$ are
$(n-k)$-convex compacta and $0\in \conv X_i$ for every $i$ then $0\in X_1*X_2*\dots *
X_{k+1}$.
\end{thm}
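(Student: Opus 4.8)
The plan is to mimic the proof of Theorem~\ref{k-sep} in Section~\ref{k-sep-sec}, but over a product of Grassmannians together with a simplex parametrizing colorful selections, using an appropriate bundle whose Euler class is nonzero. First I would recall the standard topological proof of the colorful Carath\'eodory theorem: one parametrizes choices $x_i\in X_i$ and weights $\lambda\in\Delta^k$ and tries to make $\sum_i\lambda_i x_i=0$; the existence of such a zero follows from a degree or Euler-class argument once one knows $0\in\conv X_i$ for each $i$. Here, instead of single points $x_i$, I want to pick an affine $k$-plane $L_i\ni 0$ for each color class (or perhaps one common $k$-plane) and replace the point $x_i$ by the momentum vector $m_i(L_i)=\int_{L_i}\rho_i x\,dx$ of a smoothed version $\rho_i$ of $X_i$, exactly as in Section~\ref{k-sep-sec}. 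The $(n-k)$-convexity of $X_i$ will be used, as in Corollary~\ref{k-conv-cara}, to upgrade ``$0$ lies on a $k$-plane meeting $X_i$ in a configuration whose convex hull contains $0$'' to ``$0\in\conv_{k+1}X_i$ via that $k$-plane'', i.e. to convert the vanishing of a section into the desired colorful combination.

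The key steps, in order, are: (1) smooth each $X_i$ to $\rho_i$ supported in an $\varepsilon$-neighborhood, as in Section~\ref{k-sep-sec}, so that $0\notin\conv(L\cap X_i)$ forces $m_i(L)\neq 0$. (2) Set up the parameter space: I expect it to be $G_n^k\times\Delta^k$ (a single $k$-plane $L$ through $0$ shared by all colors, and a weight vector $\lambda=(\lambda_1,\dots,\lambda_{k+1})\in\Delta^k$), and consider the section $s(L,\lambda)=\sum_{i=1}^{k+1}\lambda_i\, m_i(L)$ of the pullback of the canonical bundle $\gamma$ over $G_n^k$. (3) Show, using $0\in\conv X_i$ (hence $0\in\conv X_i$ intersected with suitable $k$-planes for small $\varepsilon$) and a boundary/degree condition on $\partial\Delta^k$, that $s$ must vanish somewhere: on each facet $\lambda_i=0$ one has a lower-color instance, and the top Euler class $e(\gamma)^{n-k}$ being nonzero (Lemma~\ref{eulergrass}) forces a zero in the interior after the standard Borsuk--Ulam/Euler-class bookkeeping. (4) At a zero $(L_0,\lambda_0)$, interpret $\sum_i\lambda_{0,i}m_i(L_0)=0$ with each $m_i(L_0)$ the momentum of $\rho_i$ over $L_0$: this says $0$ is a convex combination (with the color weights) of the barycenters-in-$L_0$ of the $X_i$, so $0\in\conv(L_0\cap\bigcup X_i)$ ``colorfully''. (5) Apply $(n-k)$-convexity: project along $L_0$; if $0\notin X_1*\cdots*X_{k+1}$ one derives, as in Corollary~\ref{k-conv-cara}, a separating hyperplane after projection, contradicting $0\in\conv X_i$ for the relevant colors. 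Passing $\varepsilon\to 0$ and using compactness yields the theorem.

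The main obstacle I anticipate is step (3)–(4): making the colorful Euler-class argument rigorous when the weights $\lambda_i$ are allowed to vanish. When some $\lambda_i=0$ the corresponding momentum drops out, so one is really running an induction on the number of colors, and one must check that the boundary behavior of the section $s$ on $\partial(G_n^k\times\Delta^k)$ is controlled enough that a relative Euler class (or an obstruction-theoretic degree) is forced to be nonzero. Equivalently, one needs the right equivariant or relative cohomology computation: the class governing zeros of a section of $\gamma$ over $G_n^k$ tensored with the ``simplex'' direction, matching $e(\gamma)^{n-k}$ of Lemma~\ref{eulergrass}. A secondary subtlety is whether a single common $k$-plane $L$ suffices for all colors or whether one must take a point in $(G_n^k)^{k+1}$, with corresponding adjustments to the bundle; I would first attempt the single-$L$ version since that is what Theorem~\ref{k-sep} and Corollary~\ref{k-conv-cara} suggest, and fall back to the product version if the boundary analysis fails.

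Finally, once the zero is produced, converting it into the combinatorial statement $0\in X_1*\cdots*X_{k+1}$ should be routine: the momentum $m_i(L_0)$ is (up to a positive scalar, the mass $\int_{L_0}\rho_i$) the centroid of $\rho_i$ restricted to $L_0$, which for small $\varepsilon$ lies near $\conv(L_0\cap X_i)\subset\conv_{k+1}X_i$; combining the $k+1$ such points with weights $\lambda_{0,i}$ gives $0$ as a convex combination of at most $(k+1)$ points, one from each $X_i$, i.e. a point of the join. I would then invoke $(n-k)$-convexity exactly as in the proof of Corollary~\ref{k-conv-cara} to rule out the alternative that $L_0$ can be chosen disjoint from some $X_i$, closing the argument.
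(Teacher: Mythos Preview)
There is a genuine gap in your step~(4): a zero of the section $s(L_0,\lambda_0)=\sum_i\lambda_{0,i}m_i(L_0)$ does \emph{not} yield a colorful selection. Writing $\mu_i=\int_{L_0}\rho_i$ and $c_i=m_i(L_0)/\mu_i$ when $\mu_i>0$, the centroid $c_i$ lies in $\conv(L_0\cap\mathrm{supp}\,\rho_i)$, hence (for small $\varepsilon$) near $\conv(L_0\cap X_i)$, but typically \emph{not} in $X_i$. So $0=\sum_i(\lambda_{0,i}\mu_i)c_i$ only gives $0\in\conv(L_0\cap X_1)*\cdots*\conv(L_0\cap X_{k+1})$, which is just $0\in\conv\bigl(L_0\cap\bigcup_i X_i\bigr)$ and says nothing about choosing one actual point from each $X_i$. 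The momentum trick in Section~\ref{k-sep-sec} works precisely because there the conclusion is \emph{negative}: $m_i(L)=0$ encodes ``$L$ misses $X_i$''. Here you need a \emph{positive} selection of points, and averaging destroys that information. Your step~(5) cannot repair this: the argument of Corollary~\ref{k-conv-cara} starts from $p\notin\conv_{k+1}X$ for a single $X$ to produce an $L$ disjoint from $X$; there is no analogous move from the colorful hypothesis $0\notin X_1*\cdots*X_{k+1}$. Finally, the relative Euler class argument over $G_n^k\times\Delta^k$ in step~(3) is also unsupported: on a facet $\{\lambda_i=0\}$ the section $\sum_{j\neq i}\lambda_j m_j(L)$ has no reason to avoid zero, so you have no boundary control.

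For comparison, the paper gives two proofs, and both use $(n-k)$-convexity at the \emph{start}, not the end, to guarantee that every linear $k$-plane $L\ni 0$ meets each $X_i$ (project along $L$; the image $\pi(X_i)\subset\mathbb R^{n-k}$ is convex and contains $\pi(0)$). The elementary proof then runs the classical minimization: take representatives $x_i\in X_i$ minimizing $\dist(0,\conv\{x_i\})$, let $L$ be the $k$-plane through $0$ parallel to $S=\conv\{x_i\}$, pick actual points $y_i\in L\cap X_i$, and use a parity argument on the crosspolytope $\{x_1,y_1\}*\cdots*\{x_{k+1},y_{k+1}\}$ inside the $(k+1)$-dimensional span of $S$ and $L$ to find a closer colorful simplex. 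The topological proof restricts to a fixed $(k+1)$-plane $M$, so that the $k$-planes $L\subset M$ form $\mathbb RP^k$ with its canonical line-bundle-complement $\gamma$, sets $Y_i=\bigcup_L(L\cap X_i)\subset E(\gamma)$ (closed, surjecting onto the base), and applies a fiberwise-join lemma: since $e(\gamma)\neq 0$ and each fiberwise join $(Y_1\cap L)*\cdots*(Y_{k+1}\cap L)$ is $(k-1)$-connected, the tautological ``evaluation'' section of $\eta^*\gamma$ over the abstract fiberwise join must vanish, which is exactly $0\in(X_1\cap L)*\cdots*(X_{k+1}\cap L)$ for some $L$. The crucial difference from your plan is that this keeps track of \emph{actual} points of $X_i$ (via the $Y_i$) rather than their momenta.
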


\begin{proof}
We use the classical scheme~\cite{bar1982} along with the degree reasoning used
in~\cite{dhst2006,barmat2007,stth2008,abbfm2009} in the proof of different generalizations of
the colorful Carath\'{e}odory theorem.

Consider the case $k = n-1$ first. In this case we have $n$ sets and $1$-convexity. Let
$x_1,\ldots, x_n$ be the system of representatives of $X_1,\ldots, X_n$ such that the
distance from $S = \conv\{x_1, \ldots, x_n\}$ to the origin is minimal. If this distance is
zero then we are done. Otherwise assume that $z\in S$ minimizes the distance.

Let $z= t_1x_1 + \dots +t_n x_n$, a convex combination of the $x_i$s. If $t_i=0$ then we
observe that $0\in \conv X_i$ and we can replace $x_i$ by another $x_i'$ so that new simplex
$S'= \conv\{x_1, \ldots, x_{i-1}, x'_i, x_{i+1}, \ldots, x_n\}$ is closer to the origin than
$S$. So we may assume that all the coefficients $t_i$ are positive and $z$ is in the relative
interior of $S$. This also implies that $S$ is $(n-1)$-dimensional, i.e., there is a unique
hyperplane containing $S$.

Consider the hyperplane $h\ni 0$ parallel to $S$. Applying the definition of $1$-convexity to
the projection along $h$ we obtain that there exists a system of representatives $y_i\in
X_i\cap h$. The set
$$
f(B) = \{x_1, y_1\}*\{x_2, y_2\}*\dots * \{x_n, y_n\}
$$
is a piece-wise linear image of the boundary of a crosspolytope, which we denote by $B$. Note
that for every facet $F$ of $B$, the vertices of the simplex $f(F)$ form a system of
representatives for $\{X_1, \ldots, X_n\}$. In particular, $S=f(F)$ for some facet $F$ of
$B$. The line $\ell$ through the origin and $z$ intersects the simplex $S=f(F)$ transversally
and so it must intersect some other $f(F')$ (where $F'\ne F$ is a facet of $B$) because of
the parity of the intersection index. The intersection $\ell\cap f(F')$ is on the segment
$[0,z]$ and cannot coincide with $z$. Therefore $f(F')$ is closer to the origin than $S$.
This is a contradiction with the choice of $S$. Thus the case $k=n-1$ is done.

The case $k=0$ of this theorem is trivial by definition, the case $k=n$ corresponds to the
colorful Carath\'eodory theorem. Now let $0<k <n-1$. Consider again a system of
representatives $x_1, \ldots, x_{k+1}$ minimizing the distance $\dist(0, \conv\{x_1, \ldots,
x_{k+1}\})$. Put $S=\conv \{x_1, \ldots, x_{k+1}\}$. As above the closest to the origin point
$z\in S$ must lie in the relative interior of $S$ if $z\neq 0$.

Let $L\subset\mathbb R^n$ be the $k$-dimensional linear subspace parallel to $S$. As in the
first proof using $(n-k)$-convexity we select $y_i\in L\cap X_i$. Then we map naturally the
boundary $B$ of a $(k+1)$-dimensional crosspolytope to the geometric join
$$
f(B) = \{x_1, y_1\}*\{x_2, y_2\}*\dots * \{x_{k+1}, y_{k+1}\}.
$$
Note that $f(B)$ is contained in the $(k+1)$-dimensional linear span of $S$ and $L$, so by
the parity argument as above the image under $f$ of some face of $B$ must be closer to the
origin than $S$.
\end{proof}

\begin{rem}
In this proof in the case $k < n-1$ we can choose some $(k+1)$-dimensional subspace $M\subset
\mathbb R^n$ and a system of representatives $\{x_1, \ldots, x_{k+1}\}$ for $M\cap X_1,
\ldots, M\cap X_{k+1}$. Then we can make the steps reducing $\dist(0, \conv \{x_1, \ldots,
x_{k+1}\})$ so that the system of representatives always remains in $M$.
\end{rem}

\section{A topological approach to Theorem~\ref{k-conv-color}}

Theorem~\ref{k-conv-color} can also be deduced from the following lemma:

\begin{lem}
\label{join-in-bundle} Let $\xi : E(\xi) \to X$ be a $k$-dimensional vector bundle over a
compact metric space $X$. Let $Y_1, \ldots, Y_{k+1}$ be closed subspaces of $E(\xi)$ such
that for every $i$ the projection $\xi|_{Y_i} : Y_i\to X$ is surjective. If $e(\xi)\neq 0$
then for some fiber $V = \xi^{-1}(x)$ the geometric join
$$
(Y_1\cap V)*\dots *(Y_{k+1}\cap V)
$$
contains $0\in V$.
\end{lem}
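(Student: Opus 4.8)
The plan is to reduce the statement to the scheme already used in the proof of Theorem~\ref{k-conv-color}, replacing the trivial bundle there (the canonical subspace $L$ together with its orthogonal complement) by the abstract bundle $\xi$, and replacing the "parity of intersection index" argument by the nonvanishing of the Euler class $e(\xi)$. First I would fix an auxiliary Euclidean metric on $E(\xi)$, so that each fiber $V=\xi^{-1}(x)$ becomes an inner product space and "distance to $0$" and "nearest point projection onto a convex set" make sense fiberwise. For each $x\in X$ choose representatives $y_i(x)\in Y_i\cap V$ (possible since each $\xi|_{Y_i}$ is surjective); of course these cannot be chosen continuously in general, but, as in the colorful Carathéodory argument, I only need a measurable or even just a pointwise choice, because the final contradiction is extracted from a single point $x$.

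Next I would run the distance-minimization argument fiberwise. Suppose for contradiction that for every $x$ the join $(Y_1\cap V)*\dots*(Y_{k+1}\cap V)$ misses $0$. For a fixed $x$, among all systems of representatives $x_1\in Y_1\cap V,\ldots,x_{k+1}\in Y_{k+1}\cap V$ pick one minimizing $\dist(0,\conv\{x_1,\ldots,x_{k+1}\})$; call the nearest point $z$ and $S=\conv\{x_1,\ldots,x_{k+1}\}$. Exactly as in the proof of Theorem~\ref{k-conv-color}, if some barycentric coordinate of $z$ vanished we could swap in a point of $Y_i\cap V$ witnessing a smaller distance (here I would need that $0\in\conv(Y_i\cap V)$ for each $i$, which I should add as a hypothesis or derive — wait, the lemma as stated does not give this, so instead the argument must be: if a coordinate vanishes, drop to a face and note the join of the remaining $Y_j\cap V$ still misses $0$, so by the crosspolytope argument in one lower dimension we again get a contradiction, i.e. we may induct on $k$). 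So $z$ lies in the relative interior of $S$, and $\aff S$ is a genuine $k$-flat in $V$. Let $\ell$ be the line in $V$ through $0$ and $z$.

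Now comes the topological core, and the step I expect to be the main obstacle. I want to build, from the $(k+1)$-fold joins of the pairs $\{x_i,y_i\}$ over all $x$, a continuous section of a rank-$k$ vector bundle associated to $\xi$ that is forced to vanish somewhere by $e(\xi)\neq 0$. Concretely: over the point $x$ the PL image $f(B)=\{x_1,y_1\}*\dots*\{x_{k+1},y_{k+1}\}$ of the boundary of a $(k+1)$-dimensional crosspolytope is a $k$-cycle in $V\setminus\{0\}$ (since the join misses $0$), and its homology class in $H_k(V\setminus\{0\})\cong H_k(S^{k-1})$... no: $V\setminus\{0\}\simeq S^{k-1}$, so $H_k=0$ when $k\geq 2$, and the right statement is that the \emph{map} $S^k=\partial B\to V\setminus\{0\}\to S^{k-1}$ has a well-defined degree-type obstruction only after passing to the fiberwise one-point setting. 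The cleaner route, which I would ultimately take, is: the assignment $x\mapsto z(x)$ (nearest point of the optimal $S$ to $0$) is not continuous, so instead apply Lemma~\ref{eulergrass}-style reasoning indirectly. Pull $\xi$ back over a simplicial resolution, approximate the choices $y_i$ by piecewise-linear sections on a fine triangulation of $X$, and on the mapping-cylinder-type space obtained by fiberwise coning $\partial B$ off to $0$ define $k+1$ sections of $\xi$ (the "outward" legs), whose common zero set, nonempty by $e(\xi)^{1}\neq 0$ applied $k+1$ times is impossible directly — rather one uses that the fiberwise join construction yields a section of $\xi$ itself over $X$ whose zero (guaranteed by $e(\xi)\neq 0$) is exactly a fiber where $0$ lies in the join. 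This "packaging the colorful-Carathéodory obstruction as a single section of $\xi$" is the delicate point; the distance-minimization and crosspolytope-parity parts are routine once it is set up, and the induction on $k$ handles the degenerate faces. I would therefore organize the write-up as: (1) setup and fiberwise metric; (2) induction on $k$, base case $k=0$ trivial; (3) the distance-minimizing system and reduction to $z$ in the relative interior; (4) construction of the section of $\xi$ from the fiberwise crosspolytope joins and application of $e(\xi)\neq 0$; (5) deriving the contradiction from $\ell\cap f(F')$ lying strictly inside $[0,z]$.
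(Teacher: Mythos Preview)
Your proposal has a genuine gap: you are trying to transplant the distance-minimization/crosspolytope argument from Theorem~\ref{k-conv-color} into each fiber, but that argument relied on structure which is simply absent here. In Theorem~\ref{k-conv-color} the auxiliary points $y_i$ on the subspace $L$ through $0$ came from $(n-k)$-convexity; in the present lemma the sets $Y_i\cap V$ are arbitrary nonempty closed subsets of the fiber, with no hypothesis that $0\in\conv(Y_i\cap V)$ and no mechanism for producing a second system of representatives on a parallel flat. You notice this yourself and propose to ``drop to a face and induct on $k$'', but dropping a coordinate leaves $k$ sets in the same $k$-dimensional fiber, while the inductive hypothesis concerns $k$ sets in a $(k-1)$-dimensional bundle --- the dimensions do not match. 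Equally, your step~(4) is the entire content of the lemma and is left as a hope (``packaging \ldots\ as a single section of $\xi$''); the discontinuity of the optimal choices $x_i(x)$, $z(x)$ is not a technicality you can triangulate away, because there is no canonical selection to approximate.

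The paper's proof avoids all of this by \emph{not} working fiber by fiber and not attempting to build a section over $X$. Instead one forms the abstract fiberwise join $Y = Y_1 *_X \cdots *_X Y_{k+1}$ (formal convex combinations of points lying over the same base point) with its projection $\eta:Y\to X$. The tautological map sending a formal convex combination to the corresponding geometric point of the fiber is then, automatically and continuously, a section of the pullback bundle $\eta^*(\xi)$ over $Y$. The crucial observation is that each fiber $\eta^{-1}(x)$ is a join of $k+1$ nonempty spaces and hence $(k-1)$-connected; a Leray spectral sequence argument then shows $\eta^*(e(\xi))=e(\eta^*(\xi))\neq 0$, so this section must vanish somewhere on $Y$, which is exactly the desired conclusion. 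The missing idea in your plan is precisely this passage to the fiberwise join as the new base space.
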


\begin{rem}
The Euler class here may be considered in integral cohomology or in the cohomology mod $2$.
The proof passes in both cases so we omit the coefficients from the notation.
\end{rem}

\begin{proof}[Reduction of Theorem~\ref{k-conv-color} to Lemma~\ref{join-in-bundle} for $k<n$]
Take a linear subspace $M\subseteq \mathbb R^n$ of dimension $k+1$. For every $k$-dimensional
linear subspace $L\subset M$ all the intersections $L\cap X_i$ are nonempty. All such $L$
constitute the canonical bundle $\gamma$ over $G_{k+1}^k = \mathbb RP^k$ with nonzero Euler
class by Lemma~\ref{eulergrass}. For any fixed $i$ the union of sets $L\cap X_i$ constitute a
closed subset of $E(\gamma)$ that we denote by $Y_i$. By Lemma~\ref{join-in-bundle} for some
$L$ the join
$$
(Y_1\cap L)*\dots *(Y_{k+1}\cap L) = (X_1\cap L)*\dots *(X_{k+1}\cap L)
$$
must contain the origin.
\end{proof}

\begin{proof}[Now we prove Lemma~\ref{join-in-bundle}]
The proof has much in common with the results of~\cite{kos1962}. The main idea is that fiberwise acyclic (up to some dimension) subsets of the total space of a vector bundle behave like sections of that vector bundle.

Let $Y=Y_1*_X*\dots *_X Y_{k+1}$ be the \emph{abstract fiberwise join} over $X$, that is the
set of all formal convex combinations
$$
t_1 y_1 + t_2 y_2 +\dots + t_{k+1}y_{k+1},
$$
where $t_i$ are nonnegative reals with unit sum and $y_i\in Y_i$ are points such that
$$
\xi(y_1) = \dots = \xi(y_{k+1}).
$$
Denote the natural projection $\eta : Y \to X$. Any formal convex combination $y\in Y$
defines a corresponding ``geometric'' convex combination $f(y)$ in the fiber
$\xi^{-1}(\eta(y))$ depending continuously on $y$. It is easy to check that $f(y)$ can be
considered as a section of the pullback vector bundle $\eta^*(\xi)$ over $Y$.

For any point $x\in X$ its preimage under $\eta$ is a join of $(k+1)$ nonempty sets
$$
(Y_1\cap \xi^{-1}(x)) * \dots * (Y_{k+1}\cap \xi^{-1}(x))
$$
and therefore $\eta^{-1}(x)$ is $(k-1)$-connected. Hence the Leray spectral sequence for the \v{C}ech cohomology $H^*(Y)$ with $E_2^{*,*} = H^*(X; \mathcal H^*(\eta^{-1}(x)))$ (the coefficient sheaf is the direct image of the homology of the total space) has empty rows number $1,\ldots, k-1$ and its differentials cannot kill the image of $e(\xi)$ in $E_r^{k, 0}$.
Hence $\eta^* (e(\xi)) = e(\eta^*(\xi))$ remains nonzero over $Y$ and by the standard
property of the Euler class for some $y\in Y$ the section $f(y)$ must be zero.
\end{proof}

\begin{rem}
In this proof we essentially use the inequality $k<n$. So the colorful Carath\'{e}odory
theorem is \emph{not} a consequence of Lemma~\ref{join-in-bundle}, at least in our present
state of knowledge.
\end{rem}

The subsets $Y_i$ in Lemma~\ref{join-in-bundle} can be considered as \emph{set-valued}
sections. The same technique proves the following:

\begin{thm}
\label{color-fp} Let $B$ be an $n$-dimensional ball and $f_i : B \to 2^B\setminus\emptyset$
for $i=1,\ldots, n+1$ be set-valued maps with closed graphs (in $B\times B$). Then for some
$x\in B$ the inclusion holds:
$$
x\in f_1(x)*\dots *f_{n+1}(x).
$$
\end{thm}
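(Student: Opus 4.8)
The plan is to mimic the proof of Lemma~\ref{join-in-bundle}, replacing the vector bundle $\xi$ by the trivial bundle $B \times \mathbb R^n \to B$ and the set-valued sections $Y_i$ by the graphs of the maps $f_i$. Concretely, I would set $Y_i = \{(x, y) : y \in f_i(x)\} \subseteq B \times B \subseteq B \times \mathbb R^n$, which is closed by hypothesis, and whose projection to $B$ is surjective because each $f_i(x)$ is nonempty. The issue is that the trivial bundle has vanishing Euler class, so the argument of Lemma~\ref{join-in-bundle} does not apply verbatim; however, the relevant topological input is not really the Euler class of the ambient bundle but a degree/boundary argument on the ball, exactly as in the colorful Carath\'eodory proof (Theorem~\ref{col-cara}) where $B = \Delta^n$ plays the role of the parameter space.

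So the key steps are as follows. First, form the abstract fiberwise join $Y = Y_1 *_B \cdots *_B Y_{n+1}$ with its projection $\eta : Y \to B$; as in Lemma~\ref{join-in-bundle} every fiber $\eta^{-1}(x)$ is an honest join of $n+1$ nonempty closed sets, hence $(n-1)$-connected, and the geometric realization map $y \mapsto f(y)$ defines a section of the pullback bundle $\eta^*(B \times \mathbb R^n) \to Y$, i.e. a continuous map $F : Y \to \mathbb R^n$ with $F(y)$ lying in the fiber over $\eta(y)$. I want to show $F$ has a zero, since a zero of $F$ is precisely a point $y$ realizing $x \in f_1(x) * \cdots * f_{n+1}(x)$ for $x = \eta(y)$. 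Second, suppose not; then $F/|F| : Y \to S^{n-1}$ is defined everywhere. Third, I restrict attention to the subset $Y_0 \subseteq Y$ of combinations supported on a single summand, i.e. $Y_0 = Y_1 \cup \cdots \cup Y_{n+1}$ viewed inside $Y$; over the boundary sphere $\partial B = S^{n-1}$ one can set up a map whose composition with $F/|F|$ has the wrong degree, contradicting that it extends over the contractible ball $B$ — this is the standard parity/degree step, here replacing the Euler-class obstruction.

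The cleanest way to run the contradiction is the spectral-sequence bookkeeping from the proof of Lemma~\ref{join-in-bundle}: the Leray spectral sequence for $\eta : Y \to B$ with $E_2^{p,q} = H^p(B; \mathcal H^q(\eta^{-1}(x)))$ has rows $1, \dots, n-1$ empty because the fibers are $(n-1)$-connected, so $H^n(Y)$ receives a contribution from $H^n(B, \partial B)$ via the relative version of the sequence; pulling back the generator of $H^n(B, \partial B; \mathbb Z)$ (respectively mod $2$) gives a nonzero class in $H^n(Y, \eta^{-1}(\partial B))$, and this class is exactly the obstruction to a nowhere-zero section of $\eta^*(B\times\mathbb R^n)$ relative to the part of $Y$ over $\partial B$ where $F$ is already built to be nonzero by construction (one arranges $f_i$ on $\partial B$, or rather uses that on $\partial B$ the needed nonvanishing holds after a preliminary outward-pointing modification, analogous to how the colorful proof handles the facets of the crosspolytope). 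Hence $F$ must vanish somewhere over the interior, which is what we want.

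The main obstacle I anticipate is making the ``boundary condition'' precise and genuinely rigorous: unlike the Euler class of a nontrivial bundle, which gives the obstruction for free, here one must exhibit an explicit nonvanishing of the realized section over $\eta^{-1}(\partial B)$ (or a suitable homotopy of $f$ near $\partial B$) to get a nonzero relative cohomology obstruction — essentially an outward-pointing / no-retraction argument on the ball, the analogue of the parity of the intersection index used in Theorem~\ref{k-conv-color}. Once that boundary normalization is in place, the fiberwise-connectivity spectral sequence argument goes through verbatim as in Lemma~\ref{join-in-bundle}, and the conclusion follows. One should also double-check that the abstract join $Y$ is compact metric (it is, as a quotient of $\Delta^n \times (Y_1 \times_B \cdots \times_B Y_{n+1})$ with $B, Y_i$ compact) so that \v{C}ech cohomology and the Leray spectral sequence behave as expected.
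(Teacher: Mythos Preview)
Your overall architecture --- form the fiberwise join of the graphs, use the $(n-1)$-connectivity of its fibers, and argue via a relative obstruction class in $H^n(B,\partial B)$ pulled back along $\eta$ --- is exactly the paper's approach. But two points are not right as written.

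First, the section whose zero you want is not the geometric realization $f(y)$ itself but the \emph{displacement} $F(y)=f(y)-\eta(y)$. With your $F=f$, a zero means $0\in f_1(x)*\cdots*f_{n+1}(x)$, not $x\in f_1(x)*\cdots*f_{n+1}(x)$; your sentence ``a zero of $F$ is precisely a point $y$ realizing $x\in f_1(x)*\cdots*f_{n+1}(x)$'' is simply false for $F=f$. The fixed-point nature of the statement forces you to subtract the base point, and this is what produces a genuine relative Euler class $e(f(y)-\eta(y))\in H^n(B,\partial B)$ rather than the vanishing absolute Euler class of the trivial bundle.

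Second, the ``boundary condition'' you flag as an obstacle has a completely concrete resolution that you are missing: precompose each $f_i$ with a homothety of ratio $1-\varepsilon$ so that every $f_i(x)\subset\inte B$, prove the result for these shrunken maps, then let $\varepsilon\to 0$ using compactness of the graphs. After the shrinking, for $\eta(y)\in\partial B$ one has $f(y)\in\inte B$, so $F(y)=f(y)-\eta(y)\neq 0$ automatically over $\eta^{-1}(\partial B)$; moreover this section is inward-pointing on the boundary, hence its relative Euler class is the generator of $H^n(B,\partial B)$. This replaces your vague ``outward-pointing modification'' and makes the spectral-sequence lift go through exactly as in Lemma~\ref{join-in-bundle}, now in the relative (pair) version. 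With these two fixes your sketch becomes the paper's proof.
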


\begin{proof}
We may assume that all sets $f_i(x)$ are in the interior of $B$, because the general case is
reduced to this one by composing $f_i$ with a homothety with scale $1-\varepsilon$ and going
to the limit $\varepsilon\to+0$.

It is known~\cite{kar2008} that for a single-valued map $f : B\to \inte B$ (considered as a
section of the trivial bundle $B\times \mathbb R^n\to B$) a fixed point ($x=f(x)$) is
guaranteed by the relative Euler class $e(f(x)-x)\in H^n(B, \partial B)$. Then the proof
proceeds as in Lemma~\ref{join-in-bundle} by lifting $e(f(x)-x)$ to the abstract fiberwise
join of graphs of $f_i$ over the pair $(B, \partial B)$ and using the properties of the
relative Euler class of a section.
\end{proof}

\begin{cor}
Suppose $X_1,\ldots, X_{n+1}$ are compacta in $\mathbb R^n$ and $\rho$ is a continuous metric
on $\mathbb R^n$. For any $x\in\mathbb R^n$ denote by $f_i(x)$ the set of farthest point from
$x$ in $X_i$ (in the metric $\rho$. Then for some $x\in \mathbb R^n$ we have
$$
x\in f_1(x)*\dots *f_{n+1}(x).
$$
\end{cor}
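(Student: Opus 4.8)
The plan is to obtain this as a direct application of Theorem~\ref{color-fp}. The only issue to address is that the "farthest point" maps $f_i$ are defined on all of $\mathbb R^n$ rather than on a ball, so the first step is to confine everything to a sufficiently large ball $B$. Pick $R$ large enough that $B = \{x : \|x\| \le R\}$ contains all the $X_i$ in its interior, with enough room to spare that for every $x \in B$ and every $i$, the $\rho$-farthest point of $X_i$ from $x$ still lies in $X_i \subset \inte B$; since the $X_i$ are compact and $\rho$ is a continuous metric, such an $R$ exists. Thus each $f_i$ restricts to a map $B \to 2^B \setminus \emptyset$.

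Next I would check the two hypotheses of Theorem~\ref{color-fp}. Nonemptiness of $f_i(x)$ is immediate: $X_i$ is a nonempty compactum and $y \mapsto \rho(x,y)$ is continuous, so it attains its maximum on $X_i$. For the closed-graph condition, suppose $(x_m, y_m) \to (x, y)$ with $y_m \in f_i(x_m)$, i.e. $\rho(x_m, y_m) \ge \rho(x_m, z)$ for all $z \in X_i$. Passing to the limit using continuity of $\rho$ gives $y \in X_i$ and $\rho(x,y) \ge \rho(x,z)$ for all $z \in X_i$, so $y \in f_i(x)$; hence the graph of $f_i$ is closed in $B \times B$. Now Theorem~\ref{color-fp} applies and yields a point $x \in B \subset \mathbb R^n$ with $x \in f_1(x) * \dots * f_{n+1}(x)$, which is exactly the claim.

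I do not anticipate a genuine obstacle here — the content is all in Theorem~\ref{color-fp}, and this corollary is a routine packaging of it. The one point that warrants a sentence of care is the choice of $R$: one must ensure not merely $X_i \subset \inte B$ but that the farthest-point correspondence does not "escape" the ball, which is automatic precisely because $f_i(x) \subseteq X_i$ for all $x$; the value of $\rho$ is irrelevant to this containment, only its continuity is used (to get closed graphs and attainment of the maximum). If one prefers, one can even avoid discussing $R$ at all by noting $f_i$ already maps into the fixed compactum $X_i$, and taking any ball $B$ whose interior contains $\bigcup_i X_i$; then $f_i|_B : B \to 2^B \setminus \emptyset$ has closed graph in $B \times B$ and Theorem~\ref{color-fp} finishes the argument.
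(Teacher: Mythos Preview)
Your proposal is correct and is exactly the intended argument: the paper states the corollary without proof, relying on the reader to see it as an immediate application of Theorem~\ref{color-fp}, and you have supplied precisely those routine verifications (nonemptiness, closed graph via continuity of $\rho$, and confinement to a ball containing $\bigcup_i X_i$ in its interior). There is nothing to add.
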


\begin{rem}
If we denote by $f_i(x)$ the \emph{closets} points in $X_i$ then this assertion becomes
almost trivial without using any topology.
\end{rem}

\section{The Carath\'{e}odory number and the Tverberg property}
\label{tverberg-sec}

Tverberg's classical theorem~\cite{tver1966} says the following:

\begin{thm}[Tverberg, 1966]
Every set of $(n+1)(r-1)+1$ points in $\mathbb R^n$ can be partitioned into $r$ parts
$X_1,\ldots, X_r$ so that the convex hulls $\conv X_i$ have a common point.
\end{thm}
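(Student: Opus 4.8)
The plan is to derive Tverberg's theorem from the colorful Carath\'{e}odory theorem (Theorem~\ref{col-cara}) via Sarkaria's tensor trick, which sits naturally alongside the preceding section. Write $N=(n+1)(r-1)+1$ and let $x_1,\dots,x_N\in\mathbb R^n$ be the given points; it suffices to partition the index set $\{1,\dots,N\}$ into blocks $I_1,\dots,I_r$ so that the sets $\{x_i:i\in I_j\}$ have a point common to all their convex hulls.

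First I would homogenize, setting $\hat x_i=(x_i,1)\in\mathbb R^{n+1}$, and then pick vectors $v_1,\dots,v_r\in\mathbb R^{r-1}$ that are the vertices of a simplex centered at the origin, so that $v_1+\dots+v_r=0$ while any $r-1$ of them are linearly independent. In the space $\mathbb R^{n+1}\otimes\mathbb R^{r-1}$, of dimension $(n+1)(r-1)=N-1$, I would form for each $i$ the finite set
$$
C_i=\{\,\hat x_i\otimes v_1,\ \hat x_i\otimes v_2,\ \dots,\ \hat x_i\otimes v_r\,\}\subset\mathbb R^{\,N-1}.
$$
Since $\frac1r\sum_{j=1}^r\hat x_i\otimes v_j=\hat x_i\otimes\bigl(\frac1r\sum_j v_j\bigr)=0$, the origin lies in $\conv C_i$ for every $i$, and there are exactly $N=(N-1)+1$ such sets.

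Applying Theorem~\ref{col-cara} in $\mathbb R^{N-1}$ then yields a choice function $c:\{1,\dots,N\}\to\{1,\dots,r\}$ and coefficients $\lambda_i\ge 0$ with $\sum_i\lambda_i=1$ and $\sum_{i=1}^N\lambda_i\,(\hat x_i\otimes v_{c(i)})=0$. Grouping by color, I set $I_j=c^{-1}(j)$ and $w_j=\sum_{i\in I_j}\lambda_i\hat x_i\in\mathbb R^{n+1}$, so that $\sum_{j=1}^r w_j\otimes v_j=0$. The crucial linear-algebra point is that, since the only linear dependence among $v_1,\dots,v_r$ is $\sum_j v_j=0$, applying an arbitrary linear functional on $\mathbb R^{n+1}$ to this relation forces $w_1=\dots=w_r$; write the common value as $w=(p,\mu)$ with $p\in\mathbb R^n$. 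The last coordinate gives $\sum_{i\in I_j}\lambda_i=\mu$ for every $j$, hence $r\mu=\sum_i\lambda_i=1$, so $\mu=1/r>0$ and every block $I_j$ is nonempty; the first $n$ coordinates give $\sum_{i\in I_j}\lambda_i x_i=p$ for every $j$, so $rp=\sum_{i\in I_j}(\lambda_i/\mu)x_i$ is a convex combination of $\{x_i:i\in I_j\}$. Thus $rp\in\bigcap_{j=1}^r\conv\{x_i:i\in I_j\}$, which is the desired Tverberg partition.

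I do not expect a genuine obstacle, because the colorful Carath\'{e}odory theorem is already in hand and all the content lies in the reduction; once the tensor set-up is fixed the rest is bookkeeping. The points that need care are the choice of the $v_j$ (equivalently, that any $r-1$ of them are independent, which is exactly what lets $\sum_j w_j\otimes v_j=0$ pin down the $w_j$) and the role of the homogenizing coordinate, which is what forces $\mu>0$ and thereby rules out degenerate partitions with an empty block. The same strategy, with convex compacta in place of finite point sets and Theorem~\ref{col-cara} (or, under an $(n-k)$-convexity assumption, the refinement in Theorem~\ref{k-conv-color}) in place of the finite colorful Carath\'{e}odory theorem, is presumably close to what yields the Tverberg-type statement for families of convex compacta announced in the abstract.
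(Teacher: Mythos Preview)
The paper does not actually prove Tverberg's theorem; it is stated as a classical result with a citation to~\cite{tver1966} and then used as motivation for Theorem~\ref{tverberg-families}. Your argument via Sarkaria's tensor trick and Theorem~\ref{col-cara} is correct and is the standard modern proof; the linear-algebra step and the use of the homogenizing coordinate to force $\mu>0$ are handled properly.

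Your closing speculation about Theorem~\ref{tverberg-families} is worth a comment, since that is where the paper actually works. The paper does use the same tensor setup $X_i=S\otimes C_i$ with $S$ the vertex set of a centered regular simplex in $\mathbb R^{r-1}$, but it does \emph{not} invoke the colorful Carath\'{e}odory theorem as a black box. Instead it runs the minimization argument directly: choose representatives $x_i\in X_i$ minimizing $\dist(0,\conv\{x_1,\dots,x_m\})$, show all coefficients in the nearest convex combination are positive, and then use the definition of $\kappa(\mathcal F)$ inside each color class $P_s$ to rewrite the nearest point as a convex combination of at most $r\kappa(\mathcal F)$ representatives, contradicting $m>r\kappa(\mathcal F)$. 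A straight application of Theorem~\ref{col-cara} in $V\otimes A$ would require $\dim(V\otimes A)+1=(n+1)(r-1)+1$ colors and would not see the Carath\'{e}odory number of the original family at all, so your proposed route recovers classical Tverberg but does not by itself yield the sharper bound $r\kappa(\mathcal F)+1$.
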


From the general position considerations it is clear that the number $(n+1)(r-1)+1$ cannot be
decreased. But we are going to decrease it after replacing a finite point set by a family of
convex compacta. Let us define the Carath\'{e}odory number for such families:

\begin{defn}
Suppose $\mathcal F$ is a family of convex compacta in $\mathbb R^n$. The
\emph{Carath\'eodory number} of $\mathcal F$ is the least $\kappa$ such that for any
subfamily $\mathcal G\subseteq \mathcal F$
$$
\conv \bigcup\mathcal G = \bigcup_{\mathcal H\subseteq\mathcal G,\ |\mathcal H| \le \kappa} \conv\bigcup\mathcal H.
$$
We denote the Carath\'{e}odory number of $\mathcal F$ by $\kappa(\mathcal F)$.
\end{defn}

Again, from the Carath\'{e}odory theorem~\cite{car1911} it follows that $\kappa(\mathcal
F)\le n+1$. Another observation is that Corollary~\ref{k-conv-cara} guarantees that
$\kappa(\mathcal F)\le k+1$ if the union of every subfamily $\mathcal G\subseteq \mathcal F$
is $(n-k)$-convex.

Now we state the analogue of Tverberg's theorem:

\begin{thm}
\label{tverberg-families}
Suppose $\mathcal F$ is a family of convex compacta in $\mathbb R^n$, $r$ is a positive integer, and
$$
|\mathcal F| \ge r\kappa(\mathcal F) + 1.
$$
Then $\mathcal F$ can be partitioned into $r$ subfamilies $\mathcal F_1,\ldots,\mathcal F_r$ so that
$$
\bigcap_{i=1}^r\conv\bigcup\mathcal F_i\neq\emptyset.
$$
\end{thm}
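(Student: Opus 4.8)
The plan is to follow Sarkaria's reduction of Tverberg's theorem to the colorful Carath\'eodory theorem, but dragging the Carath\'eodory number through the computation. A preliminary (trivial) reduction: enlarging a subfamily only enlarges its convex hull, so it suffices to produce $r$ \emph{pairwise disjoint} subfamilies $\mathcal G_1,\dots,\mathcal G_r\subseteq\mathcal F$ together with a point $p\in\bigcap_i\conv\bigcup\mathcal G_i$; the sets of $\mathcal F$ that were not used can then be dumped into $\mathcal G_1$, and each $\mathcal G_i$ is automatically nonempty since $\conv\emptyset=\emptyset$. Now write $\mathcal F=\{C_1,\dots,C_N\}$, $N\ge r\kappa+1$, $\kappa=\kappa(\mathcal F)$, put $\widehat C_i=C_i\times\{1\}\subset\mathbb R^{n+1}$, and fix $v_1,\dots,v_r\in\mathbb R^{r-1}$ affinely independent with $v_1+\dots+v_r=0$, so that any $r-1$ of them are linearly independent. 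Set $B_{i,k}=\widehat C_i\otimes v_k$, a convex compactum in $\mathbb R^{(n+1)(r-1)}$, let $\mathcal F^*=\{B_{i,k}:i\in[N],\,k\in[r]\}$, and group it into color classes $\mathcal A_i=\{B_{i,1},\dots,B_{i,r}\}$. Since $\tfrac1r\sum_k\widehat c\otimes v_k=\widehat c\otimes\tfrac1r\sum_k v_k=0$ for any $\widehat c\in\widehat C_i$, we have $0\in\conv\bigcup\mathcal A_i$ for every $i$.

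The key estimate is $\kappa(\mathcal F^*)\le r\,\kappa(\mathcal F)=r\kappa$. Given $p\in\conv\bigcup\{B_{i,k}:(i,k)\in S\}$ for some $S\subseteq[N]\times[r]$, write $p=\sum_{(i,k)\in S}\lambda_{i,k}(y_{i,k},1)\otimes v_k$ and group by $k$: $p=\sum_k\mu_k(\bar y_k,1)\otimes v_k$ with $\mu_k=\sum_i\lambda_{i,k}$ and $\bar y_k=\mu_k^{-1}\sum_i\lambda_{i,k}y_{i,k}\in\conv\bigcup\{C_i:(i,k)\in S\}$. By $\kappa(\mathcal F)=\kappa$ each $\bar y_k$ lies in $\conv\bigcup\mathcal H_k$ for some $\mathcal H_k\subseteq\{C_i:(i,k)\in S\}$ with $|\mathcal H_k|\le\kappa$; writing $(\bar y_k,1)$ as the corresponding convex combination of points $(z,1)$ with $z\in\bigcup\mathcal H_k$ and substituting back, $p$ becomes a convex combination of points lying in the sets $B_{i,k}$ with $C_i\in\mathcal H_k$, which are among $S$ and number at most $\sum_k|\mathcal H_k|\le r\kappa$. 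This proves the estimate, and in particular $N\ge r\kappa+1\ge\kappa(\mathcal F^*)+1$.

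What remains is to choose, for each $i$ in some subset $I\subseteq[N]$, a single set $B_{i,k(i)}\in\mathcal A_i$ so that $0\in\conv\bigcup_{i\in I}B_{i,k(i)}$; this is the \emph{colorful Carath\'eodory theorem for families}: a family $\mathcal A$ of convex compacta in $\mathbb R^m$ with $\kappa(\mathcal A)=\kappa^*$, partitioned into at least $\kappa^*+1$ color classes each having $0$ in the convex hull of its union, admits a rainbow selection of one set per color class (over some subcollection of colors) whose union contains $0$ in its convex hull. Applied to $\mathcal A=\mathcal F^*$ with $\kappa^*\le r\kappa$ and $N\ge\kappa^*+1$ colors, this yields $0=\sum_{i\in I}\lambda_i(x_i,1)\otimes v_{k(i)}$ with $\lambda_i\ge0$, $\sum\lambda_i=1$, $x_i\in C_i$ (collecting several points of one chosen set into one barycentric term if necessary). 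Grouping by color, $\sum_k w_k\otimes v_k=0$ with $w_k=\sum_{i\in I,\,k(i)=k}\lambda_i(x_i,1)$; writing $v_r=-\sum_{k<r}v_k$ and using that $v_1,\dots,v_{r-1}$ are linearly independent gives $w_1=\dots=w_r$, and since the last coordinates sum to $\sum\lambda_i=1$ this common value is $(p,1/r)\ne0$. Hence each $I_k=\{i\in I:k(i)=k\}$ is nonempty, $rp\in\conv\{x_i:i\in I_k\}\subseteq\conv\bigcup_{i\in I_k}C_i$ for every $k$, and absorbing the unused sets into $\mathcal G_1=\{C_i:i\in I_1\}$ as above finishes the argument.

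The main obstacle is the italicized colorful Carath\'eodory theorem for families with only $\kappa(\mathcal A)+1$ colors (for general families, not merely $(n-k)$-convex ones, so Theorem~\ref{k-conv-color} does not apply directly — indeed the color classes $\mathcal A_i$ above are not low-convexity). I expect it to follow from the topological method of Section~5: view the color classes as set-valued sections and, as in Lemma~\ref{join-in-bundle}, lift a suitable Euler class — nonzero precisely because the Carath\'eodory number is $\kappa^*$ — to the abstract fiberwise join of the set-valued sections, forcing $0$ into the join over some fiber. Alternatively one could try to mimic the optimization/parity argument in the proof of Theorem~\ref{k-conv-color}, minimizing $\dist\bigl(0,\conv\bigcup_i D_i\bigr)$ over rainbow choices $D_i\in\mathcal A_i$ and using a crosspolytope-boundary move to decrease it; the point where real care is needed is extracting the auxiliary rainbow choice in the right "fiber/flat" from the hypothesis $\kappa(\mathcal A)=\kappa^*$ rather than from an ambient-dimension or $(n-k)$-convexity assumption.
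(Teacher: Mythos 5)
Your reduction via Sarkaria's tensor trick is essentially the paper's (the paper likewise embeds $\mathbb R^n$ as the hyperplane $x_{n+1}=1$, tensors with the vertex set $S$ of a regular $(r-1)$-simplex centered at the origin, and translates $0\in\conv\{\dots\}$ back into an $r$-fold intersection via Lemma~2 of \cite{abbfm2009}), and your estimate $\kappa(\mathcal F^*)\le r\,\kappa(\mathcal F)$ is correct. But the argument is not complete: everything hinges on the italicized ``colorful Carath\'eodory theorem for families'' with $\kappa^*+1$ color classes, which you state, flag as the main obstacle, and do not prove. That is a genuine gap: you cannot invoke Theorem~\ref{col-cara} (it needs $n+1$ colors in the ambient dimension, here $(n+1)(r-1)$, far more than $r\kappa+1$) nor Theorem~\ref{k-conv-color} (it needs $(n-k)$-convexity, which your classes $\mathcal A_i$ lack, as you note). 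Your first suggested repair --- the Euler-class/fiberwise-join method of Section~5 --- is unlikely to succeed: that machinery is driven by a vector bundle whose rank is tied to the ambient dimension, not to an abstract Carath\'eodory number of a family, and the paper itself remarks that even the ordinary colorful Carath\'eodory theorem does not follow from Lemma~\ref{join-in-bundle}.

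The good news is that your second suggestion is the right one and is easier than you fear; no auxiliary fiber or flat is needed. Minimize $\dist(0,\conv\bigcup_i D_i)$ over the finitely many rainbow selections $D_i\in\mathcal A_i$. If the minimum is positive and attained at $x_0$, then the definition of $\kappa(\mathcal A)$ applied to the subfamily $\{D_1,\dots,D_N\}$ puts $x_0$ in $\conv\bigcup\mathcal H$ for some $\mathcal H$ of size at most $\kappa^*<N$, so some color $i_0$ is unused; since $0\in\conv\bigcup\mathcal A_{i_0}$, some point $y$ of some $D_{i_0}'\in\mathcal A_{i_0}$ satisfies $\langle y,x_0\rangle\le 0<|x_0|^2$, and swapping $D_{i_0}$ for $D_{i_0}'$ strictly decreases the distance --- a contradiction. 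For comparison, the paper fuses the two halves of your plan into one argument: it minimizes the distance over systems of representatives of the tensored sets $S\otimes C_i$ directly, proves all coefficients of the minimizing convex combination are positive (the analogue of the free-color swap, using $\sum_{t\in S}t\otimes c_i=0$), and only then applies $\kappa(\mathcal F)$ blockwise to each $c(s)/\alpha(s)$ to rewrite the minimizer with at most $r\kappa<m$ representatives, forcing a zero coefficient. Your factorization through $\kappa(\mathcal F^*)$ is a clean alternative, but only once the colorful lemma is actually proved.
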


\begin{rem}
Note the following: If $\kappa(\mathcal F) = n+1$ then taking a system of representatives for
$\mathcal F$ and applying the Tverberg theorem we obtain a weaker condition: $|\mathcal F|
\ge (r-1)(n+1) + 1$.
\end{rem}

\begin{rem}
This theorem originated in discussions with Andreas~Holmsen, who established the same result
in the special case $n = 2$, $\kappa(\mathcal F) = 2$, and with $|\mathcal F| \ge 2r$
(\emph{not} $2r+1$).
\end{rem}

\begin{proof}[Proof of Theorem~\ref{tverberg-families}]
We again use a minimization argument, combined with Sarkaria's tensor trick~\cite{sark1992}.
Let $|\mathcal F| = m$, $\kappa = \kappa(\mathcal F)$, and
$$
\mathcal F= \{C_1, C_2, \ldots, C_m\}.
$$
Put the space $\mathbb R^n$ to $A=\mathbb R^{n+1}$ as a hyperplane given by the equation
$x_{n+1}=1$. Consider a set $S$ of vertices of a regular simplex in some $(r-1)$-dimensional
space $V$ and assume that $S$ is centered at the origin.

Now define the subsets of $V\otimes A$ by
$$
X_i = S\otimes C_i,
$$
and consider a system of representatives $(x_1, x_2,\ldots, x_m)$ for the family of sets
$\mathcal G = \{X_1,X_2,\ldots, X_m\}$. Such a system gives rise to a partition $\{P_s: s\in
S\}$ of $\{1,\dots,m\}$ the following way. For $s \in S$ define
$$
P_s=\{i\in \{1,\dots,m\}:x_i = s\otimes c_i,\ \text{for some}\ c_i\in C_i\}.
$$

One form of Sarkaria's trick, Lemma 2 in \cite{abbfm2009} says that $0 \in \conv
\{x_1,\dots,x_m\}$ if and only if $\bigcap_{s\in S}\conv\{c_i: i\in P_s\}\ne \emptyset$.
Based on this we choose a system of representatives $(x_1,\dots,x_m)$ of $\mathcal G$ so that
the distance between $0$ and $\conv\{x_1, x_2, \ldots, x_m\}$ is minimal. If this distance is
zero then the required partition of $\mathcal F$ is given by the sets $\{C_i \in \mathcal F:
i\in P_s\}$, $s\in S$.

Assume that the minimal distance is not zero. Then it is attained on some convex combination
$$
x_0 = \alpha_1x_1 + \alpha_2x_2+\dots+\alpha_mx_m.
$$
We {\bf claim} that $\alpha_i>0$ for all $i \in \{1,\dots,m\}$. Assume, for instance that
$\alpha_1=0$, and $x_1=s\otimes c_1$ for some $c_1\in C_1$ and $s \in S$. Now $x_1$ can be
replaced by $t\otimes c_1$ for any $t \in S$ as such a change does not influence $x_0$. The
distance minimality condition implies that all the points $t\otimes c_1$ are separated from
the origin by a hyperplane in $V\otimes A$, which is the support hyperplane for the ball,
centered at the origin and touching $\conv\{x_1,\ldots, x_m\}$. Obviously
$$
\sum_{t\in S} t\otimes c_i = 0,
$$
so the points $t\otimes c_i$, $t \in S$ are not separated from the origin. This contradiction
completes the proof of the claim.

The above convex combination representing $x_0$ can be written as
$$
x_0 = \sum_{s\in S} s\otimes\left(\sum_{i\in P_s} \alpha_ic_i\right).
$$
Assume first that no $P_s$ is the emptyset. Define $c(s)=\sum_{i\in P_s} \alpha_ic_i$ and
$\alpha(s)=\sum_{i\in P_s} \alpha_i >0$. Then $c(s)/\alpha(s)$ is a convex combination of
elements $c_i\in C_i, i \in P_s$. Thus $c(s)/\alpha(s) \in \bigcup_{i\in P_s}C_i$. According
to the definition of the Carath\'{e}odory number, there is a subset $P_s'\subset P_s$, of
size at most $\kappa$, such that $c(s)/\alpha(s) \in \bigcup_{i\in P_s'}C_i$ for every $s \in
S$. This means that there are $c_i' \in C_i$ for all $i \in P_s'$ such that $c(s)/\alpha(s)
\in \conv \{c_i': i \in P_s'\}$, in other words, $c(s)=\sum_{i \in P_s'}\alpha_i'c_i'$ with
positive $\alpha_i'$ satisfying $\sum_{i \in P_s'}\alpha_i'=\alpha(s)$. Thus
$$
x_0 = \sum_{s\in S} s\otimes\left(\sum_{i\in P_s'} \alpha_i'c_i\right).
$$
In this case the minimum distance is attained on the convex hull of no more that $r\kappa$
elements as each $|P_s'|\le \kappa$. But $m>r\kappa$ contradicting the claim.

Finally we have deal with the (easy) case when some $P_s=\emptyset$. The above argument
works, with no change at all, for the non-empty $P_s$ implying that $x_0$ can be written as
the convex combination of at most $(r-1)\kappa$ elements. Again $m>(r-1)\kappa$ and the same
contradiction finishes the proof.
\end{proof}

\vskip 1cm {\bf Acknowledgment.} We thank Peter~Landweber who has drawn our attention to those old results by Fenchel, Hanner, and R\aa dstr\"{o}m and Alexander~Barvinok for discussions and examples of $k$-convexity.


\begin{thebibliography}{99}

\bibitem{abbfm2009}
J.~Arocha, I.~B\'{a}r\'{a}ny, J.~Bracho, R.~Fabila, and L.~Montejano. Very colorful theorems.
// Discrete and Computational Geometry, 42:2 (2009), 142--154.

\bibitem{bar1982}
I.~B\'{a}r\'{a}ny. A generalization of Carath\'{e}odory's theorem. // Discrete Math., 40 (1982), 141--152.

\bibitem{barmat2007}
I.~B\'{a}r\'{a}ny and J.~Matou\v{s}ek. Quadratic lower bound for the number of colourful
simplices. // SIAM Journal on Discrete Mathematics, 21 (2007), 191--198.

\bibitem{barv2002}
A.~Barvinok. A course in convexity. Graduate studies in mathematics, 54. AMS, Providence, 2002.

\bibitem{brick1961}
L.~Brickman. On the field of values of a matrix. // Proceedings of the American Mathematical Society, 12:1 (1961), 61--66.

\bibitem{car1911}
C.~Carath\'{e}odory, \"Uber den Variabilit\"atsbereich der Fourierschen Konstanten von
positiven harmonischen Funktionen. // Rend. Circ. Mat. Palermo, 32 (1911), 193--217.

\bibitem{dhst2006}
A.~Deza, S.~Huang, T.~Stephen, and T.~Terlaky. Colourful simplicial depth. // Discrete and
Computational Geometry, 35:4 (2006), 597--615.

\bibitem{dol1987}
V.L.~Dol'nikov. Common transversals for families of sets in $\mathbb R^n$ and connections
between theorems of Helly and Borsuk. (In Russian) // Doklady Akademii Nauk USSR., 297:4
(1987), 777--780.

\bibitem{eck1993}
J.~Eckhoff. Helly, Radon, and Carath\'eodory type theorems. // Handbook of Convex Geometry,
ed. by P.M.~Gruber and J.M.~Willis, North-Holland, Amsterdam, 1993, 389--448.

\bibitem{fen1929}
W.~Fenchel, \"{U}ber Kr\"{u}mmung und Windung geschlossener Raumkurven. // Math.
Ann., 101 (1929), 238--252.

\bibitem{hr1951}
O.~Hanner and H.~R\aa dstr\"{o}m. A generalization of a theorem of Fenchel. // Proc. Amer.
Math. Soc., 2 (1951), 589--593.

\bibitem{kar2008}
R.N.~Karasev. Dual theorems on central points and their generalizations. // Sbornik:
Mathematics, 199:10 (2008), 1459--1479; a corrected version is available at
\href{http://arxiv.org/abs/0909.4915}{arXiv:0909.4915}.

\bibitem{kar2009}
R.N.~Karasev. Theorems of Borsuk-Ulam type for flats and common transversals of families of
convex compact sets. // Sbornik. Math, 200:10 (2009), 1453--1471; also available at
\href{http://arxiv.org/abs/0905.2747}{arXiv:0905.2747}.

\bibitem{ks1966}
S.~Karlin, W.J.~Studden. Tchebycheff systems: With applications in analysis and statistics.
Interscience Publishers, New York, 1966.

\bibitem{kos1962}
A.~Kosi\'{n}ski. A theorem on families of acyclic sets and its applications. // Pacific J. Math.,  12:1 (1962), 317--325.  

\bibitem{mil1988}
V.D.~Milman. A few observations on the connections between local theory and some other
fields. // Geometric Aspects of Functional Analysis, Lecture Notes in Mathematics 1317, 1988,
283--289.

\bibitem{sss2009}
R.~Sanyal, F.~Sottile, B.~Sturmfels. Orbitopes. //
\href{http://arxiv.org/abs/0911.5436}{arXiv:0911.5436}, 2009.

\bibitem{stth2008}
T.~Stephen and H.~Thomas. A quadratic lower bound for colourful simplicial depth. // Journal
of Combinatorial Optimization, 16:4 (2008), 324--327.

\bibitem{tver1966}
H.~Tverberg. A generalization of Radon's theorem. // J. London Math. Soc., 41 (1966), 123--128.

\bibitem{sark1992}
K. S. Sarkaria. Tverberg's theorem via number fields. //Israel J. Math., 79 (1992), 317—320.

\bibitem{zivvre1990}
R.T.~\v{Z}ivaljevi\'{c}, S.T.~Vre\'{c}ica. An extension of the ham sandwich theorem.  Bull.
London Math. Soc., 22:2 (1990), 183--186.

\end{thebibliography}
\end{document}